\newcommand{\coker}{{\operatorname{coker}}}
\DeclareMathOperator{\Id}{id}
\DeclareMathOperator{\Gr}{Gr}
\newcommand{\U}{{\mathcal U}}
\newcommand{\0}{{\mathbf 0}}
\newcommand{\C}{{\mathbb C}}
\newcommand{\Z}{{\mathbb Z}}
\newcommand{\Q}{{\mathbb Q}}
\renewcommand{\P}{{\mathbb P}}
\newcommand{\HH}{{\mathbb H}}
\newcommand{\hyp}{{\mathbb H}}
\newcommand{\supp}{\operatorname{supp}}
\newcommand{\im}{\mathop{\rm im}\nolimits}
\newcommand{\Idot}{\mathbf I^\bullet}
\newcommand{\Ndot}{\mathbf  N^\bullet}
\newtheorem{defn0}{Definition}[section]
\newtheorem{prop0}[defn0]{Proposition}
\newtheorem{conj0}[defn0]{Conjecture}
\newtheorem{thm0}[defn0]{Theorem}
\newtheorem{lem0}[defn0]{Lemma}
\newtheorem{corollary0}[defn0]{Corollary}
\newtheorem{example0}[defn0]{Example}
\newtheorem{remark0}[defn0]{Remark}
\newtheorem{question0}[defn0]{Question}
\newenvironment{prop}{\begin{prop0}}{\end{prop0}}
\newenvironment{thm}{\begin{thm0}}{\end{thm0}}
\newenvironment{lem}{\begin{lem0}}{\end{lem0}}
\newenvironment{cor}{\begin{corollary0}}{\end{corollary0}}
\newenvironment{rem}{\begin{remark0}\rm}{\end{remark0}}
\newtheorem{main}{Main Theorem}
\newcommand{\propref}[1]{Proposition~\ref{#1}}
\newcommand{\thmref}[1]{Theorem~\ref{#1}}
\newcommand{\lemref}[1]{Lemma~\ref{#1}}
\newcommand{\secref}[1]{Section~\ref{#1}}
\newcommand{\remref}[1]{Remark~\ref{#1}}
\title{Rational Homology Manifolds and \\
Hypersurface Normalizations}
\author{Brian Hepler}
\begin{document}

\maketitle 
\begin{abstract}
We prove a criterion for determining whether the normalization of a complex analytic space on which the shifted constant sheaf is perverse is a rational homology manifold, using a perverse sheaf known as the multiple-point complex. This perverse sheaf is naturally associated to any ``parameterized space", and has several interesting connections with the Milnor monodromy and mixed Hodge Modules. 
\end{abstract}

\section{Introduction}\label{sec:intro}
Let $\U$ be an open neighborhood of the origin in $\C^N$, let $X \subseteq \U$ be a complex analytic space containing $\0$ of pure dimension $n$,  on which the (shifted) constant sheaf $\Q_X^\bullet[n]$ is perverse (e.g., if $X$ is a local complete intersection), and let $\pi : Y \to X$ be the normalization of $X$. 

There is then a surjection of perverse sheaves $\Q_X^\bullet[n] \to \Idot_X \to 0$, where $\Idot_X$ is the intersection cohomology complex on $X$ with constant $\Q$ coefficients. 

\begin{rem}
It is a classic result (see, e.g., \cite{inthom2} page 111) that there always exists a morphism $\Q_X^\bullet[n] \to \Idot_X$ in the derived category $D_c^b(X)$ (where $\Idot_X$ has constant $\Q$ coefficients). In our situation, $\Q_X^\bullet[n]$ is a perverse sheaf on $X$, so this descends to a morphism of perverse sheaves. Since we are working with field coefficients, $\Idot_X$ is a semi-simple object in the category of perverse sheaves on $X$, so one easily concludes that the cokernel of this morphism must be zero, since the morphism $\Q_X^\bullet[n] \to \Idot_X$ is an isomorphism when restricted to the smooth part of $X$.

It is worth noting that this morphism also exists with $\Z$ coefficients (and, for a local complete intersection, $\Z_X^\bullet[n]$ is still perverse), and the morphism is still surjective, but we can no longer use the fact that $\Idot_X$ is a semi-simple object. Instead, we use that $\Idot_X$ is also the intermediate extension of the local system $\Z_{X \backslash \Sigma X}^\bullet[n]$ to all of $X$ (where $\Sigma X$ denotes the singular locus of $X$), and therefore has no perverse quotient objects with support contained in $\Sigma X$. Since $\Z_X^\bullet[n] \to \Idot_X$ is an isomorphism when restricted to $X \backslash \Sigma X$, it follows that the cokernel of this morphism is zero.

\end{rem}
Since the category of perverse sheaves on $X$ is Abelian, there is a perverse sheaf $\Ndot_X$ on $X$ such that 
$$
0 \to \Ndot_X \to \Q_X^\bullet[n] \to \Idot_X \to 0 \quad (\dag)
$$
is a short exact sequence of perverse sheaves. 

Thus, if $\Idot_Y$ is intersection cohomology on $Y$ with constant $\Q$ coefficients, we have $\pi_*\Idot_Y \cong \Idot_X$ ($\pi$ is a small resolution, in the sense of Goresky and Macpherson \cite{inthom2}), and we obtain the short exact sequence of perverse sheaves
$$
0 \to \Ndot_X \to \Q_X^\bullet[n] \to \pi_*\Idot_Y \to 0 
$$
on $X$. We refer to this exact sequence as the \textbf{fundamental short exact sequence of the normalization}. This short exact sequence, and the perverse sheaf $\Ndot_X$ in particular, have been examined recently in several papers by the author and D. Massey in the case where the normalization $Y$ is smooth (\cite{hepmasparam}, \cite{hepdefhyper}), where $\Ndot_X$ is called the \textbf{multiple-point complex} of the normalization (see \secref{sec:main}). 

Disregarding the normalization, if one just examines the short exact sequence $(\dag)$, D.Massey has recently shown in \cite{comparison} that, in the case where $X = V(f)$ is a hypersurface,
$$
\Ndot_X \cong \ker \{ \Id -\widetilde T_f\},
$$
where $\widetilde T_f$ is the monodromy action on the vanishing cycles $\phi_f[-1]\Q_\U^\bullet[n+1]$, and the kernel takes place in the category of perverse sheaves on $X = V(f)$. In this context, Massey refers to $\Ndot_X$ as the \textbf{comparison complex} on $X$. It also seems that one may obtain this result in the algebraic setting (with $\Q$ coefficients) using the language of mixed Hodge modules. 

Looking at $(\dag)$, one notices immediately that $\Q_X^\bullet[n] \cong \Idot_X$ if and only if $\Ndot_X = 0$; that is, the LCI $X$ is a rational homology manifold (or, a \textbf{$\Q$-homology manifold}) precisely when the complex $\Ndot_X$ vanishes (for this criterion, see for example \cite{BorhoMac}, \cite{ICmon}). We will recall $\Q$-homology manifolds and their properties in \secref{sec:main}. It is then natural to ask that, given the normalization $Y$ of $X$ and the resulting fundamental short exact sequence, is there a similar result relating $\Ndot_X$ to whether or not $Y$ is a $\Q$-homology manifold? 

We answer this question in our main result:
\begin{main}[\thmref{thm:main}]
$Y$ is a $\Q$-homology manifold if and only if $\Ndot_X$ has stalk cohomology concentrated in degree $-n+1$; i.e., for all $p \in X$, $H^k(\Ndot_X)_p$ is non-zero only possibly when $k=-n+1$.
\end{main}

In general, it is quite difficult to compute these stalk cohomology groups, even in the ``next simplest" case where the normalization of a hypersurface has an isolated singularity, e.g., the normalization of a surface with a curve singularity, which we will work out in detail in \secref{sec:examples}.

\begin{rem}\label{rem:saito}
M. Saito has recently drawn interesting connections with the multiple-point complex $\Ndot_X$ to the setting of mixed Hodge modules in a recent preprint \cite{msaitow0}. In particular, Saito shows, for an arbitrary reduced complex algebraic variety $X$ of pure dimension $n$, that the weight zero part of the cohomology group $H^1(X;\Q)$ is given by
$$
W_0 H^1(X;\Q) \cong \coker \{ H^0(Y;\Q) \to H^0(X;\mathcal{F}_X) \},
$$
where $\pi :Y \to X$ is the normalization of $X$, and $\mathcal{F}_X$ is a certain constructible sheaf on $X$, given by the cokernel of the natural morphism of sheaves $\Q_X \to \pi_* \Q_Y$. The algebraic setting is necessary here, in order to endow $H^0(X;\mathcal{F}_X)$ with a mixed Hodge structure, and for working in the derived category of mixed Hodge modules. 

This constructible sheaf $\mathcal{F}_X$ is none other than the cohomology sheaf $H^{-n+1}(\Ndot_X)$; this follows immediately from taking the long exact sequence in cohomology of the fundamental short exact sequence of the normalization. If, as in Saito's case, the sheaf $\Q_X^\bullet[n]$ is not perverse, one can obtain this isomorphism from the distinguished triangle
$$
\mathcal{F}_X[n] \to \Ndot_X[1] \to \pi_*\Ndot_Y[1] \overset{+1}{\to}
$$
obtained via the octahedral axiom in the derived category $D_c^b(X)$, together with the fact that $Y$ is normal. 

Consequently, we can interpret Saito's result as an isomorphism
$$
W_0 H^1(X;\Q) \cong \coker \{ H^0(Y;\Q) \to \hyp^{-n+1}(X;\Ndot_X) \},
$$
since $H^0(X;H^{-n+1}(\Ndot_X)) \cong \hyp^{-n+1}(X;\Ndot_X)$.  It would seem to be an interesting question in the local analytic case to relate this result with the isomorphism $\Ndot_X \cong W_{n-1} \Q_X^\bullet[n]$ obtained in \remref{rem:semisimple}, where $\Ndot_X$ is endowed with the natural structure of a mixed Hodge module on $X$.


\end{rem}

We would like to thank the Referee for suggesting the content of \remref{rem:semisimple}, and J\"{o}rg Sch\"{u}rmann for many helpful discussions regarding mixed Hodge modules in general and \remref{rem:saito} and \remref{rem:semisimple} in particular.

\section{Main result}\label{sec:main}

Before we prove our main result, we first recall a theorem of Borho and MacPherson \cite{BorhoMac} giving us several equivalent characterizations of rational homology manifolds:

\begin{thm}([B-M])\label{thm:ICman}
The following are equivalent:
\begin{enumerate}

\item $X$ is a $\Q$-homology manifold (i.e., $\Idot_X \cong \Q_X^\bullet[n]$);

\smallskip

\item $\mathcal{D}\left ( \Q_X^\bullet[n] \right ) \cong \Q_X^\bullet[n]$, where $\mathcal{D}$ is the Verdier duality functor;

\smallskip

\item For all $p \in X$, for all $k$, $H^k(X,X\backslash \{p\};\Q) = 0$ unless $k = 2n$, and $H^{2n}(X,X\backslash \{p\};\Q) \cong \Q$.

\end{enumerate}
\end{thm}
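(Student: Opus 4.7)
The plan is to prove the equivalences via $(1) \Rightarrow (2)$, $(2) \Leftrightarrow (3)$, and $(2) \Rightarrow (1)$. The implication $(1) \Rightarrow (2)$ is immediate from the canonical self-duality of intersection cohomology: if $\Idot_X \cong \Q_X^\bullet[n]$, then applying $\mathcal{D}$ and using $\mathcal{D}(\Idot_X) \cong \Idot_X$ yields $\mathcal{D}(\Q_X^\bullet[n]) \cong \Q_X^\bullet[n]$.

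The equivalence $(2) \Leftrightarrow (3)$ is a translation via stalks of the dualizing complex. Since $\mathcal{D}(\Q_X^\bullet[n]) \cong \omega_X^\bullet[-n]$, condition $(2)$ is equivalent to $\omega_X^\bullet \cong \Q_X[2n]$. On stalks, $H^k(\omega_X^\bullet)_p \cong H^k(i_p^! \Q_X) = H^k_{\{p\}}(X;\Q) \cong H^k(X, X\setminus\{p\};\Q)$, with the last isomorphism obtained by excision on a small contractible neighborhood of $p$. The degree and value constraints in $(3)$ are then precisely what force $\omega_X^\bullet \cong \Q_X[2n]$ at the level of stalks, and conversely.

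For $(2) \Rightarrow (1)$, I would dualize the fundamental short exact sequence $(\dag)$ and use $(2)$ together with self-duality of $\Idot_X$ to produce an injection $\Idot_X \hookrightarrow \Q_X^\bullet[n]$. The composition $\Idot_X \hookrightarrow \Q_X^\bullet[n] \twoheadrightarrow \Idot_X$ restricts to the identity on the smooth locus $X \setminus \Sigma X$, and since morphisms of intermediate extensions are determined by their restriction to an open dense smooth subset, this composition equals the identity on $\Idot_X$. Hence $(\dag)$ splits, yielding $\Q_X^\bullet[n] \cong \Idot_X \oplus \Ndot_X$ in the perverse category. Passing to stalk cohomology shows $\Ndot_X$ has cohomology concentrated in degree $-n$, so $\Ndot_X \cong F[n]$ for some constructible sheaf $F$ supported on $\Sigma X$.

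The main obstacle is the final step: upgrading this concentration statement to the vanishing $\Ndot_X = 0$. The key fact is that a nonzero perverse sheaf whose cohomology is concentrated in a single degree $-n$ must be supported on a subvariety of dimension exactly $n$, which follows from a local analysis on a top-dimensional stratum of the support (where the sheaf is locally constant, and the perversity conditions reduce to the smooth case). Since $\supp(\Ndot_X) \subseteq \Sigma X$ has dimension strictly less than $n$, this forces $\Ndot_X = 0$, giving $\Idot_X \cong \Q_X^\bullet[n]$.
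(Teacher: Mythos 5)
First, a structural point: the paper does not prove this theorem — it is quoted from Borho–MacPherson \cite{BorhoMac} without argument — so there is no proof of the paper's to compare yours against, and I can only assess your proposal on its own terms. Your implications $(1)\Rightarrow(2)$ and $(2)\Rightarrow(1)$ are correct, and the latter is a genuinely nice argument in the paper's own framework: dualizing $(\dag)$ to get an injection $\Idot_X\hookrightarrow \Q_X^\bullet[n]$, using the injectivity of the restriction map $\Hom(\Idot_X,\Idot_X)\to\Hom(\Q_{X\setminus\Sigma X}^\bullet[n],\Q_{X\setminus\Sigma X}^\bullet[n])$ to see that the composite endomorphism of $\Idot_X$ is an isomorphism, splitting off $\Ndot_X$ as a direct summand of $\Q_X^\bullet[n]$, and then invoking the perverse cosupport condition along an open stratum of $\supp\Ndot_X$ to conclude that a nonzero perverse sheaf with stalk cohomology concentrated in degree $-n$ must have support of dimension exactly $n$. (Note that this route uses the standing hypothesis that $\Q_X^\bullet[n]$ is perverse, via $(\dag)$; that is fine in the paper's context, and in any case condition $(2)$ forces perversity, since the support condition for $\Q_X^\bullet[n]$ is automatic.)

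The genuine gap is in $(3)\Rightarrow(2)$. A minor slip first: $H^k(\omega_X^\bullet)_p$ is not $H^k(i_p^!\Q_X^\bullet)$; the stalk of the dualizing complex computes local Borel–Moore homology, $H^k(\omega_X^\bullet)_p\cong H_{-k}(X,X\setminus\{p\};\Q)$, which over $\Q$ is the dual of $H^{-k}(X,X\setminus\{p\};\Q)$ — so your translation of $(3)$ into ``$\omega_X^\bullet$ has stalk cohomology $\Q$ concentrated in degree $-2n$'' survives, but only after fixing the variance. The substantive problem is the next step: that stalkwise statement does not ``force $\omega_X^\bullet\cong\Q_X[2n]$.'' Truncation gives $\omega_X^\bullet\cong\mathcal{H}^{-2n}(\omega_X^\bullet)[2n]$, but a constructible sheaf all of whose stalks are $\Q$ need not be constant (consider $j_!\Q_U\oplus i_*\Q_Z$ for a closed subset $Z$ with open complement $U$), and you have produced no comparison morphism. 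To close this you need the canonical fundamental-class morphism $\Q_X^\bullet[2n]\to\omega_X^\bullet$ and the fact that the local fundamental class generates $H^{BM}_{2n}$ of an irreducible germ (essentially the dual statement to Lemma \ref{lem:irredcomps}), then check it is a stalkwise isomorphism — exactly the kind of ``the map is the nonzero diagonal map'' verification the paper performs for $\Q_Y^\bullet[n]\to\Idot_Y$ in the proof of Theorem \ref{thm:main}. Alternatively, and more cleanly, bypass $(2)$ and prove $(3)\Rightarrow(1)$ directly from Deligne's axiomatic characterization of $\Idot_X$: the support axiom for $\Q_X^\bullet[n]$ is vacuous, the normalization on the smooth locus is automatic, and the cosupport axiom is precisely the vanishing asserted in $(3)$.
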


The proof of \thmref{thm:main} relies on the following well-known lemma.
\begin{lem}\label{lem:irredcomps}
Let $X$ be a complex analytic space of pure dimension $n$. Then, for $p \in X$, the rank of $H^{-n}(\Idot_X)_p$ is equal to the number of irreducible components of $X$ at $p$.
\end{lem}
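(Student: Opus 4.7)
The plan is to work locally at $p$, decompose $\Idot_X$ near $p$ according to the irreducible components of $X$ through $p$, and then reduce to computing $H^{-n}(\Idot_{X_i})_p$ for each irreducible component $X_i$.

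First, I would pass to a small open neighborhood $V$ of $p$ on which $X$ has only finitely many irreducible components $X_1, \ldots, X_r$ through $p$, each necessarily of dimension $n$ by pure-dimensionality of $X$. Writing $\Sigma X$ for the singular locus and setting $U_i := X_i \setminus (\Sigma X \cup \bigcup_{j \neq i} X_j)$, the smooth locus of $X$ decomposes on $V$ as the disjoint union $U = \bigsqcup_i U_i$, with each $U_i$ open and dense in $X_i$. Since $\Idot_X$ is the intermediate extension of $\Q_U^\bullet[n]$ from $U$ to $X$, and intermediate extension is additive and transitive along open inclusions followed by the closed inclusion $\iota_i : X_i \hookrightarrow X$, one obtains a local decomposition
\[
\Idot_X|_V \cong \bigoplus_{i=1}^{r} (\iota_i)_* \Idot_{X_i}.
\]

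Next, I would reduce to the irreducible case and show that for an irreducible pure $n$-dimensional analytic space $X_i$ and any point $q \in X_i$, one has $H^{-n}(\Idot_{X_i})_q \cong \Q$. The support axiom for the intermediate extension forces the lowest non-vanishing cohomology sheaf of $\Idot_{X_i}$ to coincide with the ordinary direct image $(j_i)_* \Q_{X_i^\circ}$, where $j_i : X_i^\circ \hookrightarrow X_i$ is the inclusion of the smooth part; equivalently, the BBD inductive construction of $\Idot_{X_i}$ along a Whitney stratification leaves $H^{-n}$ unaffected by the truncations imposed on deeper strata. The stalk of $(j_i)_*\Q_{X_i^\circ}$ at $q$ is $\varinjlim_N H^0(N \cap X_i^\circ; \Q)$, counting the number of connected components of the smooth locus of $X_i$ in arbitrarily small neighborhoods $N$ of $q$. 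Since an irreducible complex analytic germ is locally irreducible with connected smooth locus, this number is $1$.

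Combining these steps, since closed pushforward is exact on cohomology sheaves, I would conclude
\[
H^{-n}(\Idot_X)_p \cong \bigoplus_{i=1}^{r} H^{-n}(\Idot_{X_i})_p \cong \Q^r,
\]
so the rank equals $r$, the number of irreducible components of $X$ at $p$. The main technical obstacle is the clean identification $H^{-n}(\Idot_{X_i}) \cong (j_i)_*\Q_{X_i^\circ}$ for irreducible $X_i$, which requires carefully unwinding the interplay between perverse truncation and ordinary sheaf truncation in the intermediate-extension construction; once this is in hand, the remainder of the argument is essentially bookkeeping, and no input beyond the defining support/cosupport conditions of $\Idot$ is needed.
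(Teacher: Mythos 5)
The paper does not actually prove this lemma; its ``proof'' is a citation to the literature (Whitney, Theorem 1G; {\L}ojasiewicz, Theorem 4), so your proposal is doing strictly more work than the source. Your outline is the standard argument and is essentially sound: the local decomposition $\Idot_X|_V \cong \bigoplus_i (\iota_i)_*\Idot_{X_i}$ over the branches at $p$ is legitimate (intermediate extension commutes with finite direct sums, and the intermediate extension of a summand supported on $U_i$ is the pushforward from $\overline{U_i} = X_i$ of $\Idot_{X_i}$), and the identification $H^{-n}(\Idot_{X_i}) \cong (j_i)_*\Q_{X_i^\circ}$ does follow from the first step of the Deligne construction, since the initial truncation gives the ordinary pushforward in lowest degree and all subsequent truncations act only in strictly higher degrees. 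Be aware, though, that the geometric input you invoke at the end --- that the smooth locus of a small representative of an irreducible analytic germ is connected in arbitrarily small neighborhoods of the base point --- is precisely the content of the theorems the paper cites, so your argument reduces the lemma to the same nontrivial external fact rather than replacing it.

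One intermediate claim is false as written: it is not true that $H^{-n}(\Idot_{X_i})_q \cong \Q$ for an irreducible pure $n$-dimensional space $X_i$ and \emph{every} point $q \in X_i$. Irreducibility of a space does not imply local irreducibility at all of its points; for the nodal cubic $V(y^2 - x^2(x+1))$, which is irreducible, the stalk of $H^{-1}(\Idot)$ at the node has rank $2$. What you actually need, and what is true, is the computation only at $q = p$, where $X_i$ is a sufficiently small representative of an irreducible germ at $p$ and the connectedness theorem gives exactly one component of the smooth locus near $p$. Since your final display takes stalks only at $p$, the proof survives, but you should restrict the intermediate statement accordingly (and drop the phrase ``locally irreducible,'' which is not what irreducibility of the germ at $p$ gives you at nearby points).
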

\begin{proof}
This result is well-known to experts, see e.g. Theorem 1G (pg. 74) of \cite{whitneybook}, or Theorem 4 (pg. 217) \cite{lojbookintro}
\end{proof}

\smallskip

Note that taking stalk cohomology at $p \in X$ of the fundamental short exact sequence yields the short exact sequence
$$
0 \to \Q \to H^{-n}(\pi_*\Idot_Y)_p \to H^{-n+1}(\Ndot_X)_p \to 0,
$$
and isomorphisms $H^k(\pi_*\Idot_Y)_p \cong H^{k+1}(\Ndot_X)_p$ for $-n+1 \leq k \leq -1$. With this in mind, we claim that:

\smallskip

\begin{thm}\label{thm:main}
$Y$ is a $\Q$-homology manifold if and only if $\Ndot_X$ has stalk cohomology concentrated in degree $-n+1$.
\end{thm}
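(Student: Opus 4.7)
The plan is to translate the stalk-concentration condition on $\Ndot_X$ into an equivalent condition on $\Idot_Y$, then invoke normality of $Y$ together with characterization (1) of \thmref{thm:ICman}.

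First, I would exploit the long exact sequence in stalk cohomology already extracted from the fundamental short exact sequence. Since $\Ndot_X$ is perverse on the $n$-dimensional space $X$, we have $H^k(\Ndot_X) = 0$ for $k \notin [-n, 0]$; and the displayed short exact sequence $0 \to \Q \to H^{-n}(\pi_*\Idot_Y)_p \to H^{-n+1}(\Ndot_X)_p \to 0$ shows moreover that $H^{-n}(\Ndot_X)_p = 0$ for every $p \in X$. Hence the condition that $\Ndot_X$ have stalks concentrated in degree $-n+1$ is equivalent to the vanishing $H^{k+1}(\Ndot_X)_p = 0$ for all $-n+1 \leq k \leq -1$ and all $p$. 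Via the isomorphisms $H^k(\pi_*\Idot_Y)_p \cong H^{k+1}(\Ndot_X)_p$ in that range, and since $\pi_*\Idot_Y = \Idot_X$ has $H^0(\Idot_X) = 0$ by the strict support condition for IC complexes, this is in turn equivalent to $\pi_*\Idot_Y$ having stalk cohomology concentrated in degree $-n$.

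Next, because $\pi$ is finite, $R\pi_* = \pi_*$ and stalks decompose as $H^k(\pi_*\Idot_Y)_p = \bigoplus_{q\in\pi^{-1}(p)} H^k(\Idot_Y)_q$. So the preceding condition is equivalent to $\Idot_Y$ itself having stalk cohomology concentrated in degree $-n$. At this point, normality of $Y$ enters: if $\Idot_Y \simeq H^{-n}(\Idot_Y)[n]$, then since $\Idot_Y$ is the intermediate extension of $\Q_{Y\backslash \Sigma Y}^\bullet[n]$ along the open inclusion $j:Y\backslash \Sigma Y \hookrightarrow Y$, we have $H^{-n}(\Idot_Y) = j_*\Q_{Y\backslash \Sigma Y}$, which equals $\Q_Y$ by local irreducibility of the normal space $Y$. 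Thus $\Idot_Y \cong \Q_Y^\bullet[n]$, and $Y$ is a $\Q$-homology manifold by \thmref{thm:ICman}(1). The converse direction is immediate: a $\Q$-homology manifold $Y$ satisfies $\Idot_Y \cong \Q_Y^\bullet[n]$, whose stalk cohomology is concentrated in degree $-n$.

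I expect the most delicate step to be this final appeal to normality; the other manipulations are routine bookkeeping with perversity bounds and the long exact sequence, but passing from the bare stalk-concentration of $\Idot_Y$ to the actual isomorphism $\Idot_Y \cong \Q_Y^\bullet[n]$ requires the local-irreducibility consequence of normality to identify $j_*\Q_{Y\backslash \Sigma Y}$ with $\Q_Y$ rather than with a locally constant sheaf of higher rank that could arise if $Y$ had reducible local structure.
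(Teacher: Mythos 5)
Your proposal is correct and follows essentially the same route as the paper: both directions reduce, via the long exact sequence on stalks of the fundamental short exact sequence, to the statement that $\Idot_Y$ has stalk cohomology concentrated in degree $-n$, and then use local irreducibility of the normal space $Y$ to pin down the degree-$(-n)$ cohomology. The only cosmetic difference is in the last step: the paper applies \lemref{lem:irredcomps} stalkwise to get $H^{-n}(\Idot_Y)_q \cong \Q$ and then checks that the natural map $\Q_Y^\bullet[n] \to \Idot_Y$ is nonzero on each stalk, whereas you identify $H^{-n}(\Idot_Y)$ globally with $j_*\Q_{Y\backslash \Sigma Y} \cong \Q_Y$ and use that a complex with a single nonvanishing cohomology sheaf is isomorphic to its shift --- both hinge on the same local-irreducibility fact.
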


\begin{proof}

($\Longrightarrow$) Suppose that $Y$ is a $\Q$-homology manifold, and let $p \in X$ be arbitrary.  Since $Y$ is a $\Q$-homology manifold, $\Q_Y[n] \cong \Idot_Y$ in $D_c^b(Y)$, from which it follows $H^k(\Ndot_X)_p = 0$ for $k \neq -n+1$ by the above isomorphisms.

\bigskip

($\Longleftarrow$) Suppose that, for all $p \in X$,  $H^k(\Ndot_X)_p \neq 0$ only possibly when $k =-n+1$. We wish to show that the natural morphism $\Q_Y[n] \to \Idot_Y$ is an isomorphism in $D_c^b(Y)$. 

There is still the short exact sequence
$$
0 \to \Q \to H^{-n}(\pi_*\Idot_Y)_p \to H^{-n+1}(\Ndot_X)_p \to 0
$$
and $H^k(\pi_*\Idot_Y)_p = 0$ for $k \neq -n$, since $H^k(\pi_*\Idot_Y)_p \cong H^{k+1}(\Ndot_X)_p$ for all $p \in X$ and $-n+1 \leq k \leq -1$. In degree $-n$, we have
$$
H^{-n}(\pi_*\Idot_Y)_p \cong \bigoplus_{q \in \pi^{-1}(p)} H^{-n}(\Idot_Y)_q.
$$
This then implies that, for all $q \in Y$, $H^k(\Idot_Y)_q = 0$ for $k \neq -n$. Our goal is to calculate this stalk cohomology in degree $-n$. Since $Y$ is normal, and thus locally irreducible, it follows by \lemref{lem:irredcomps} that $H^{-n}(\Idot_Y)_q \cong \Q$ for all $q \in Y$. 
\medskip

Finally, we claim that the natural morphism $\Q_Y^\bullet[n] \to \Idot_Y$ is an isomorphism in $D_c^b(Y)$. In stalk cohomology at any point $q \in Y$, both $H^k(\Q_Y^\bullet[n])_q$ and $H^k(\Idot_Y)_q$ are non-zero only in degree $k=-n$, with stalks isomorphic to $\Q$.  Consequently, the natural morphism is an isomorphism in $D_c^b(Y)$ provided that the morphism 
$$
\Q \cong H^{-n}(\Q_Y^\bullet[n])_q \to H^{-n}(\Idot_Y)_q \cong \Q
$$
is not the zero morphism. But this is just the ``diagonal'' morphism from a single copy of $\Q$ to the number of connected components of $Y \backslash \{p\}$, which is clearly non-zero.  Thus, $Y$ is a $\Q$-homology manifold. 

\end{proof}

\bigskip

\begin{cor}
Suppose that $\Ndot_X$ has stalk cohomology concentrated in degree $   -n+1$. Then, for all $p \in X$, if $j_p : \{p\} \hookrightarrow X$ is the inclusion map, we have
$$
H^k(j_p^!\Ndot_X) \cong \left \{ \begin{matrix} \widetilde H^{n+k-1}(K_{X,p};\Q), && \text{ for $0\leq k \leq n-1$}; \\ 0, && \text{ else. } \end{matrix} \right . ,
$$
where $K_{X,p}$ denotes the real link of $X$ at $p$, i.e., the intersection of $X$ with a sphere of sufficiently small radius, centered at $p$.
\end{cor}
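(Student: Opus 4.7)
The plan is to apply $j_p^!$ to the fundamental short exact sequence and read off the cohomology of $j_p^!\Ndot_X$ from the resulting long exact sequence. By our hypothesis and \thmref{thm:main}, $Y$ is a $\Q$-homology manifold, so $\Idot_Y\cong\Q_Y^\bullet[n]$ and the fundamental short exact sequence is equivalent to the distinguished triangle $\Ndot_X \to \Q_X^\bullet[n] \to \pi_*\Q_Y^\bullet[n] \overset{+1}{\to}$. Applying $j_p^!$ to this triangle yields the long exact sequence
$$\cdots\to H^k(j_p^!\Ndot_X)\to H^k(j_p^!\Q_X^\bullet[n])\to H^k(j_p^!\pi_*\Q_Y^\bullet[n])\to H^{k+1}(j_p^!\Ndot_X)\to\cdots.$$

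I would then identify the two middle terms. Using that a small open neighborhood of $p$ in $X$ is homeomorphic to the (contractible) open cone on $K_{X,p}$, together with the long exact sequence of the pair, one obtains $H^k(j_p^!\Q_X^\bullet[n])=H^{k+n}_{\{p\}}(X;\Q)\cong\widetilde H^{n+k-1}(K_{X,p};\Q)$. For the other term, proper base change for the finite map $\pi$ along $j_p$ gives
$$j_p^!\pi_*\Q_Y^\bullet[n]\cong\bigoplus_{q\in\pi^{-1}(p)}j_q^!\Q_Y^\bullet[n];$$
since $Y$ is a $\Q$-homology manifold of pure dimension $n$ with dualizing complex $\Q_Y^\bullet[2n]$, we have $j_q^!\Q_Y^\bullet[n]\cong\Q_{\{q\}}[-n]$, so this direct sum is concentrated in degree $n$, where it equals $\Q^{|\pi^{-1}(p)|}$.

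For $k\leq n-1$, both the $k$- and $(k-1)$-degrees of the $\pi_*$-term vanish, so the long exact sequence degenerates to an isomorphism
$$H^k(j_p^!\Ndot_X)\cong\widetilde H^{n+k-1}(K_{X,p};\Q),$$
which gives the corollary's formula in the range $0\leq k\leq n-1$; for $k<0$ both sides vanish, the left-hand side by perversity of $\Ndot_X$ and the right-hand side because perversity of $\Q_X^\bullet[n]$ forces $K_{X,p}$ to be rationally $(n-2)$-connected.

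The main obstacle is establishing $H^k(j_p^!\Ndot_X)=0$ for $k\geq n$, since the long exact sequence alone allows a priori nonzero contributions coming from $\widetilde H^{2n-1}(K_{X,p};\Q)$ and $\Q^{|\pi^{-1}(p)|}$ in degrees $n$ and $n+1$. For this I would combine the hypothesis with the perverse support bound: together they identify $\Ndot_X$ with $\mathcal{F}[n-1]$ for the constructible sheaf $\mathcal{F}=H^{-n+1}(\Ndot_X)$, and the perversity condition $\dim_{\C}\supp H^{-j}(\Ndot_X)\leq j$ applied at $j=n-1$ shows $\mathcal{F}$ is supported on a closed analytic subset $Z\subseteq X$ with $\dim_{\R} Z\leq 2n-2$. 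Writing $j_p^!\Ndot_X\cong (j_p^!\mathcal{F})[n-1]$ and invoking the standard cone-on-link identification $H^*_{\{p\}}(Z;\mathcal{F}|_Z)\cong\widetilde H^{*-1}(K_{Z,p};\mathcal{F}|_Z)$, which vanishes for $*>\dim_{\R}Z=2n-2$, I obtain $H^k(j_p^!\Ndot_X)=H^{k+n-1}(j_p^!\mathcal{F})=0$ whenever $k+n-1>2n-2$, i.e., for $k\geq n$, completing the proof.
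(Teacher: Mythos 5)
Your proof is correct and follows the same route as the paper's one-line justification: apply $j_p^!$ to the fundamental short exact sequence, identify $H^k(j_p^!\Q_X^\bullet[n])$ with $\widetilde H^{n+k-1}(K_{X,p};\Q)$, and use the $\Q$-homology-manifold hypothesis on $Y$ (via \thmref{thm:main} and proper base change) to see that $j_p^!\pi_*\Idot_Y$ is concentrated in degree $n$. Your supplementary argument for the vanishing when $k\geq n$ — exploiting that $\Ndot_X$ is a shifted constructible sheaf supported on a set of complex dimension at most $n-1$ — correctly supplies a step that the long exact sequence alone does not give and that the paper's terse proof leaves implicit.
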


This follows by applying $j_p^!$ to the fundamental short exact sequence of the normalization, and taking stalk cohomology. 

\bigskip


When the normalization $Y \overset{\pi}{\to} X$ is a $\Q$-homology manifold, the short exact sequence 
$$
0 \to \Q \to H^{-n}(\pi_*\Idot_Y)_p \to H^{-n+1}(\Ndot_X)_p \to 0
$$
allows us to identify, given \lemref{lem:irredcomps}, that
$$
m(p):=\dim_\Q H^{-n+1}(\Ndot_X)_p = |\pi^{-1}(p)|-1.
$$
Consequently, we conclude that the support of $\Ndot_X$ is none other than the \textbf{image multiple-point set} of the morphism $\pi$, which we denote by $D$; precisely, we have
$$
D := \overline{ \{ p \in X \, | \, |\pi^{-1}(p)| > 1 \}}.
$$
For this reason, we have referred to the perverse sheaf $\Ndot_X$ as the \textbf{multiple-point complex} of $X$ (or, of the morphism $\pi$, as we do in \cite{hepdefhyper} and \cite{hepmasparam}). It is immediate from the fundamental short exact sequence that one always has the inclusion $D \subseteq \Sigma X$.

\smallskip

In such cases (see \secref{sec:examples}), it is useful to partition $X$ into subsets $X_k = m^{-1}(k)$ for $k \geq 1$; clearly, one has 
$$
D = \overline{\bigcup_{k >1} X_k}.
$$
Finally, since $D$ is the support of a perverse sheaf which, on an open dense subset of $D$, has non-zero stalk cohomology only in degree $-n+1$, it follows that $D$ is purely $(n-1)$-dimensional. 

\begin{rem}\label{rem:semisimple}

\medskip

When $Y$ is a $\Q$-homology manifold, in fact, both $\Idot_X$ and $\Ndot_X$ are just sheaves (up to a shift); moreover, the short exact sequence of perverse sheaves
$$
0 \to \Ndot_X \to \Q_X^\bullet[n] \to \Idot_X \to 0
$$
can be rewritten as a short exact sequence of (constructible) sheaves
$$
0 \to \Q_X \to \Idot_X[-n] \to \Ndot_X[1-n] \to 0.
$$
We then ask, is it ever the case that $\Ndot_X$ is a semi-simple perverse sheaf, so that $\Q_X^\bullet[n]$ is an extension of semi-simples? One can find a Whitney stratification $\mathfrak{S}$ of $X$ for which the sets $X_k$ are finite unions of strata for all $k$. Then, for each stratum $S \subset X_k$, the monodromy of the local system ${\Ndot_X}_{|_S}$ is determined by the monodromy of the set $\pi^{-1}(p)$ for $p \in S$; since this is a finite set with $k$ elements, it follows immediately that ${\Ndot_X}_{|_S}$ is semi-simple as a local system on $S$ (since the monodromy action is semi-simple). 
\medskip

Since ${\Ndot_X}_{|_S}$ is semi-simple as a local system for any stratum $S \subset X_k$, is $\Ndot_X$ semi-simple as a perverse sheaf? If one has a Whitney stratification of $X$ for which the sets $X_k$ are finite unions of strata, and for which the subset $D = \supp \Ndot_X$ is a union of closed strata, then the above argument demonstrates (together with \cite{msaitow0} Section 2.4) that $\Ndot_X$ is semi-simple as a perverse sheaf. In general, however, this fails to be the case (see \secref{sec:examples}). 
 
 \bigskip
 
More generally, when $\Q_X^\bullet[n]$ is a perverse sheaf, one may use the general machinery of M. Saito (see \cite{mixedhodgemod}, page 325 (4.5.9)) to obtain an isomorphism of perverse sheaves
$$
\Gr_n^W \Q_X^\bullet[n] \overset{\thicksim}{\to} \Idot_X.
$$
underlying the corresponding isomorphism of mixed Hodge modules. Since $\dim_\0 X = n$, the induced weight filtration on $\Q_X^\bullet[n]$ terminates after degree $n$, so that $W_n \Q_X^\bullet[n] \cong \Q_X^\bullet[n]$. Consequently, the above isomorphism yields a short exact sequence
$$
0 \to W_{n-1} \Q_X^\bullet[n] \to \Q_X^\bullet[n] \to \Idot_X \to 0
$$
of perverse sheaves on $X$, implying $\Ndot_X \cong W_{n-1} \Q_X^\bullet[n]$. From this identification, it follows that $\Ndot_X$ is semi-simple as a perverse sheaf provided that the weight filtration $W_i \Q_X^\bullet[n]$ of $W_{n-1} \Q_X^\bullet[n] \cong \Ndot_X$ for $i < n$ is concentrated in one degree $k < n$, i.e., $W_i \Q_X^\bullet[n] = 0$ for $i < k$ and $W_i \Q_X^\bullet[n] \cong W_k \Q_X^\bullet[n]$ for $k < i < n$. Then, $\Ndot_X \cong \Gr_k^W \Q_X^\bullet[n]$ underlies a pure polarizable Hodge module, which is therefore by construction a semi-simple perverse sheaf.  

We anticipate that the reverse implication will be more difficult, and be outside the scope of this paper.

\end{rem}

\section{Interpretation in terms of Comparison Complex}\label{sec:compcomp}

Recall that, by D. Massey, if $X = V(f)$ is a hypersurface, $\Ndot_X = \ker \{ \Id - \widetilde T_f\}$ is the perverse eigenspace of the eigenvalue 1 of the monodromy action on $\phi_f[-1]\Q_\U^\bullet[n+1]$, where $\U$ is an open neighborhood of the origin in $\C^{n+1}$. 

Since the content of this paper is interesting only in the case where $\dim_\0 \Sigma f = n-1$ (otherwise, $X$ is its own normalization), we will assume throughout that this is the case; consequently, the stalk cohomology $H^k(\phi_f[-1]\Q_\U^\bullet[n+1])_p$ is possibly non-zero only for $-n+1 \leq k \leq 0$. 

In general, it is \textbf{not the case} that, given a morphism of perverse sheaves, the cohomology of the stalk of the kernel of $G$ is isomorphic to the kernel  of the cohomology on the stalks; that is, there may exist points $p \in \Sigma f$ such that 
$$
H^k(\ker \{\Id - \overset{\thicksim}{T_f} \})_p \ncong \ker \{ \Id-  \widetilde T_{f,p}^k\}.
$$
However, this isomorphism \textbf{does hold} in degree $-n+1$ for all $p \in \Sigma f$ (See Lemma 5.1 of \cite{comparison}):

\begin{prop}\label{prop:identities}
Let $\pi : Y \to V(f)$ be the normalization of $V(f)$, and suppose $Y$ is a $\Q$-homology manifold. Then, the following isomorphisms hold for all $p \in \Sigma f$:
\begin{align*}
H^k(\ker \{ \Id - \overset{\thicksim}{T_f} \})_p &\cong \left \{ \begin{matrix} \ker \{ \Id - \widetilde T_{f,p}^{-n+1} \}, && \text{ if $k = -n+1$; } \\ 0, && \text{ if $k \neq -n+1$.} \end{matrix} \right . \\
H^{-n+1}(\im \{\Id-\overset{\thicksim}{T_f}\})_p &\cong \im \{\Id-\widetilde T_{f,p}^{-n+1}\}, \\
H^{-n+1}(\coker\{\Id-\overset{\thicksim}{T_f}\})_p &\cong \coker\{\Id-\widetilde T_{f,p}^{-n+1}\},
\end{align*}
where $\Id-\widetilde T_{f,p}^{-n+1}$ is the Milnor monodromy action on $H^1(F_{f,p};\Q)$.
\end{prop}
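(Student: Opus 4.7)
The plan is to combine two already-established inputs: Massey's identification of $\Ndot_X$ with a perverse kernel, and our Main Theorem. From the discussion at the start of this section, $\Ndot_X \cong \ker\{\Id - \widetilde T_f\}$ as perverse sheaves on $V(f)$. Therefore, under the standing hypothesis that $Y$ is a $\Q$-homology manifold, \thmref{thm:main} immediately yields $H^k(\ker\{\Id - \widetilde T_f\})_p = 0$ for every $p \in X$ and every $k \neq -n+1$; this is precisely the ``else'' half of the first displayed isomorphism.

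For the identifications in degree $-n+1$ of kernel, image, and cokernel, the plan is to appeal directly to Lemma 5.1 of \cite{comparison}, which holds unconditionally and was invoked in the paragraph preceding the proposition. The mechanism behind that lemma runs as follows: the two short exact sequences of perverse sheaves
\begin{align*}
0 &\to \ker\{\Id - \widetilde T_f\} \to \phi_f[-1]\Q_\U^\bullet[n+1] \to \im\{\Id - \widetilde T_f\} \to 0,\\
0 &\to \im\{\Id - \widetilde T_f\} \to \phi_f[-1]\Q_\U^\bullet[n+1] \to \coker\{\Id - \widetilde T_f\} \to 0
\end{align*}
induce long exact sequences in stalk cohomology at each $p \in \Sigma f$. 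Combining these with the observation (noted above) that $H^k(\phi_f[-1]\Q_\U^\bullet[n+1])_p$ is concentrated in degrees $-n+1 \leq k \leq 0$, and with the perversity support conditions for the perverse sub- and quotient objects $\im\{\Id - \widetilde T_f\}$ and $\coker\{\Id - \widetilde T_f\}$, one reads off exactness of the functor $H^{-n+1}(-)_p$ across this diagram. This yields the three identifications with the pointwise $\ker$, $\im$, and $\coker$ of the Milnor monodromy $\Id - \widetilde T_{f,p}^{-n+1}$ on $H^1(F_{f,p};\Q)$.

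I expect the final write-up to be quite brief, because the substantive work has already been done elsewhere: the degree $-n+1$ identifications come from Lemma 5.1 of \cite{comparison}, and the vanishing in other degrees comes from \thmref{thm:main} via Massey's identification $\Ndot_X \cong \ker\{\Id - \widetilde T_f\}$. The main point to get right, though hardly an obstacle, is to keep straight that the degree $-n+1$ half of the statement is genuinely unconditional; only the extra vanishing in degrees $k \neq -n+1$ requires the $\Q$-homology manifold hypothesis on $Y$, and this dependence enters solely through the use of the Main Theorem.
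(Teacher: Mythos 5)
Your overall route coincides with the paper's: the vanishing of $H^k(\ker\{\Id-\widetilde T_f\})_p$ for $k\neq -n+1$ is obtained from Massey's identification $\Ndot_X\cong\ker\{\Id-\widetilde T_f\}$ together with \thmref{thm:main}, and the degree $-n+1$ identifications are then read off from the stalkwise long exact sequences of the two short exact sequences of perverse sheaves you display; these are exactly the two sequences appearing (already in stalk form) in the paper's proof.

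There is, however, one concrete misstatement that would leave a gap if you implemented the plan as written: the claim that ``the degree $-n+1$ half of the statement is genuinely unconditional.'' What is unconditional --- and what the citation of Lemma 5.1 of \cite{comparison} in the paper covers --- is only the \emph{kernel} identification in degree $-n+1$: all the perverse sheaves involved are supported on the $(n-1)$-dimensional set $\Sigma f$, so the functor $H^{-n+1}(-)_p$ is left exact and therefore commutes with kernels. It does not commute with images or cokernels. Concretely, right-exactness of
$$
0 \to H^{-n+1}(\ker\{\Id-\widetilde T_f\})_p \to H^1(F_{f,p};\Q) \to H^{-n+1}(\im\{\Id-\widetilde T_f\})_p \to 0
$$
requires the connecting map into $H^{-n+2}(\ker\{\Id-\widetilde T_f\})_p$ to vanish, and neither the support conditions on $\im$ and $\coker$ nor the concentration of the stalks of $\phi_f[-1]\Q_\U^\bullet[n+1]$ in degrees $[-n+1,0]$ --- the only two inputs you list --- gives you that. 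It is precisely the hypothesis-derived vanishing $H^{-n+2}(\ker\{\Id-\widetilde T_f\})_p=0$ that closes this sequence on the right, which is why the paper's proof opens with ``Since $H^k(\ker\{\Id-\widetilde T_f\})_p=0$ for $k\neq -n+1$, the result follows from the short exact sequences\dots''. In other words, the $\Q$-homology manifold hypothesis enters twice: once to kill the stalks in degrees $k\neq -n+1$, and again (through that same vanishing) to make the degree $-n+1$ computations of the image and cokernel come out as the pointwise image and cokernel of $\Id-\widetilde T_{f,p}^{-n+1}$. The fix is small --- add the vanishing of the higher stalk cohomology of the kernel to your list of inputs when deducing exactness --- but as stated your mechanism does not justify the second and third isomorphisms.
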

\begin{proof}
Since $H^k(\ker \{\Id - \widetilde T_f\})_p = 0$ for $k \neq -n+1$, the result follows from the short exact sequences
$$
0 \to H^{-n+1}(\ker\{\Id-\overset{\thicksim}{T_f}\})_p \to H^1(F_{f,p};\Q) \to H^{-n+1}(\im \{\Id-\overset{\thicksim}{T_f}\})_p \to 0,
$$
and 
$$
0 \to H^{-n+1}(\im \{\Id-\overset{\thicksim}{T_f}\})_p \to H^1(F_{f,p};\Q) \to H^{-n+1}(\coker\{\Id-\overset{\thicksim}{T_f}\})_p \to 0.
$$
\end{proof}

By taking stalk cohomology of the fundamental short exact sequence, we have
$$
0 \to H^{-n}(\Q_X^\bullet[n])_p \to H^{-n}(\Idot_X)_p \to \ker \{ \Id-  \widetilde T_{f,p}^{-n+1}\} \to 0.
$$
Since $\pi_*\Idot_Y \cong \Idot_X$, and $H^{-n}(\pi_*\Idot_Y)_p \cong \Q^{|\pi^{-1}(p)|}$, 
$$
\ker \{ \Id-  \widetilde T_{f,p}^{-n+1}\} \cong \Q^{|\pi^{-1}(p)|-1}
$$
for all $p \in X$, yielding the following nice lower-bound:

\begin{cor}\label{cor:milnorlowerbound}
$$
\dim_\Q H^1(F_{f,p};\Q) \geq |\pi^{-1}(p)|-1.
$$
\end{cor}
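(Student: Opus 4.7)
The plan is to formalize the chain of reasoning already sketched in the paragraph immediately preceding the corollary: pass the fundamental short exact sequence to stalk cohomology at $p$, identify each term, and feed the dimension count into \propref{prop:identities}.

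First I would take the long exact sequence in stalk cohomology at $p \in X$ arising from $0 \to \Ndot_X \to \Q_X^\bullet[n] \to \pi_*\Idot_Y \to 0$. Since $\Q_X$ has stalk cohomology concentrated in degree $0$ and since the perverse support condition applied to $\Ndot_X$ (whose support is contained in $\Sigma X$, of dimension at most $n-1$) forces $H^{-n}(\Ndot_X)_p = 0$, the low-degree portion of this sequence collapses to
$$
0 \to H^{-n}(\Q_X^\bullet[n])_p \to H^{-n}(\pi_*\Idot_Y)_p \to H^{-n+1}(\Ndot_X)_p \to 0.
$$

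Next I would identify each term concretely. The leftmost term is $\Q$. For the middle term, $\pi$ is finite, so stalks commute with $\pi_*$, giving $H^{-n}(\pi_*\Idot_Y)_p \cong \bigoplus_{q\in\pi^{-1}(p)} H^{-n}(\Idot_Y)_q$; because $Y$ is normal, hence locally irreducible at each such $q$, \lemref{lem:irredcomps} yields $H^{-n}(\Idot_Y)_q \cong \Q$, and the middle term has dimension $|\pi^{-1}(p)|$. Exactness then forces $\dim_\Q H^{-n+1}(\Ndot_X)_p = |\pi^{-1}(p)|-1$.

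The final step is to transfer this dimension to a subspace of $H^1(F_{f,p};\Q)$. By \propref{prop:identities}, $H^{-n+1}(\Ndot_X)_p \cong \ker\{\Id - \widetilde T_{f,p}^{-n+1}\}$, which sits inside $H^1(F_{f,p};\Q)$, and the asserted inequality follows immediately. The only possible obstacle is a bookkeeping one: \propref{prop:identities} was stated under the hypothesis that $Y$ is a $\Q$-homology manifold, so that assumption must remain in force. If one wished to remove it, one could instead use Massey's identification $\Ndot_X \cong \ker\{\Id - \widetilde T_f\}$ directly, together with the perverse support condition for $\im\{\Id - \widetilde T_f\}$ (whose support lies in $\Sigma f$ of dimension $\leq n-1$), to conclude that the natural map $H^{-n+1}(\Ndot_X)_p \to H^1(F_{f,p};\Q)$ remains injective in general, which still delivers the lower bound.
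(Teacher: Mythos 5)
Your proposal is correct and follows essentially the same route as the paper: take stalk cohomology of the fundamental short exact sequence at $p$, identify the middle term as $\Q^{|\pi^{-1}(p)|}$ via \lemref{lem:irredcomps} and local irreducibility of $Y$, conclude $\dim_\Q H^{-n+1}(\Ndot_X)_p = |\pi^{-1}(p)|-1$, and embed this into $H^1(F_{f,p};\Q)$ as $\ker\{\Id - \widetilde T_{f,p}^{-n+1}\}$ using \propref{prop:identities}. Your closing remark about removing the $\Q$-homology-manifold hypothesis via the degree $-n+1$ identification is also consistent with the paper, which notes that this particular isomorphism holds for all $p \in \Sigma f$ by Lemma 5.1 of \cite{comparison}.
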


\bigskip

\begin{rem}
In the case where $X = V(f)$ is a hypersurface with smooth normalization in some open neighborhood $\U$ of the origin in $\C^{n+1}$, we prove in \cite{hepdefhyper} that a strong relationship holds between the \textbf{characteristic polar multiplicities} of $\Ndot_X$ and the L\^{e} numbers of the function $f$ (Theorem 5.2). 

\smallskip

A careful observation yields that the same result holds for hypersurfaces whose normalizations are $\Q$-homology manifolds (since all computations in the Theorem take place inside the hypersurface $V(f)$). Moreover, one can even use the same proof as Theorem 5.2 to obtain this more general result. 




\end{rem}

\begin{rem}\label{rem:saitomonodromy}
 In the hypersurface case $X = V(f)$, Saito's calculation of $H^0(X;\mathcal{F}_X)$ via invariant cycles of the monodromy (\cite{msaitow0} Section 2.4) is especially interesting to us. 

Suppose the normalization of $X$ is a rational homology manifold. Massey's result that $\Ndot_X \cong \ker\{\Id-\widetilde T_f\}$ together with \propref{prop:identities} allows us to identify $\mathcal{F}_X$ with the constructible sheaf ${\mathbf {ker}} \{\Id - \widetilde T_f^{-n+1} \}$ whose stalk at a point $p$ is 
$$
\ker \{ \Id - \widetilde T_{f,p}^{-n+1}\} \subseteq H^{-n+1}(\phi_f[-1]\Q_\U^\bullet[n+1])_p \cong H^1(F_{f,p};\Q),
$$
where $\widetilde T_{f,p}^{-n+1}$ is the Milnor monodromy operator on $H^1(F_{f,p};\Q)$. Consequently, Saito's calculation of $H^0(X;\mathcal{F}_X)$ via the internal monodromy of $\mathcal{F}_X$ allows us to compute information about the Milnor monodromy of $f$.
\end{rem}

\bigskip

\section{Example}\label{sec:examples}

We consider the following ``trivial, non-trivial" example of the normalization of a surface $X$ with one-dimensional singularity in $\C^3$, which nicely illustrates the content of \thmref{thm:main}. 
\bigskip

Let $f(x,y,z) = xz^2-y^2(y+x^3)$, so that $X = V(f) \subseteq \C^3$ has critical locus $\Sigma f = V(y,z)$. Then, if we let $Y = V(u^2-x(y+x^3),uy-xz,uz-y(y+x^3)) \subseteq \C^4$, the projection map $\pi : Y \to X$ is the normalization of $X$.

\smallskip

It is easy to check that $\Sigma Y= V(x,y,z,u)$, and 
$$
\pi^{-1}(\Sigma f) = V(u^2-x^{4},y,z).
$$
It then follows that $X_k = \emptyset$ if $k > 2$, and $X_2 = V(y,z) \backslash \{\0\}$, so that 
$$
\supp \Ndot_X = V(y,z) = \Sigma f.
$$

\medskip

For $p \in X$,  
\begin{align*}
H^{-2}(\pi_*\Idot_Y)_p \cong \bigoplus_{q \in \pi^{-1}(p)} H^{-2}(\Idot_Y)_q  \quad (\dag 4.1)\\
\end{align*}
But $\pi^{-1}(p) \subseteq Y \backslash \Sigma Y$, and $\left (\Idot_Y \right )_{|_{\pi^{-1}(p)}} \cong \left (\Q_Y^\bullet[2] \right )_{|_{\pi^{-1}(p)}}$, so from $(\dag 4.1)$, it follows that
$$
H^{-2}(\pi_*\Idot_Y)_p \cong \Q^2.
$$
Similarly, since $\left (\Idot_Y \right )_{Y \backslash \Sigma Y} \cong \Q_{Y \backslash \Sigma Y}^\bullet[2]$, it follows that 
$$
H^0(\Ndot_X)_p \cong H^{-1}(\pi_*\Idot_Y)_p = 0.
$$

\medskip

When $p = \0$, we find
$$
H^k(\Idot_Y)_\0 \cong \left \{ \begin{matrix} \HH^k(K_{Y,\0};\Idot_Y), && \text{ if $k \leq -1$} \\ 0, && \text{ if $k > -1$} \end{matrix} \right .
$$
Since $Y$ has an isolated singularity at the origin in $\C^4$, we further have
$$
\HH^k(K_{Y,\0};\Idot_Y) \cong H^{k+2}(K_{Y,\0},\Q).
$$
For $0 < \epsilon \ll 1$, the sphere $S_\epsilon$ transversely intersects $Y$ near $\0$, so the real link $K_{Y,\0} = Y \cap S_\epsilon$ is a compact, orientable, smooth manifold of (real) dimension 3. We are interested in computing the two integral cohomology groups $H^0(K_{Y,\0};\Q)$ and $H^1(K_{Y,\0};\Q)$. 

Because $K_{Y,\0}$ is also connected, we can apply Poincar\'e duality to find $H^0(K_{Y,\0};\Q) \cong \Q$. 

Consider the standard parameterization of the twisted cubic $\nu : \P^1 \to \P^3$ via
$$
\nu([s:t]) = [s^3:st^2:t^3:s^2t] = [x:y:z:u]
$$
which lifts to a map $\nu : \C^2 \to \C^4$, parameterizing the affine cone over the twisted cubic, i.e., the normalization $Y = V(u^2-xy,uy-xz,uz-y^2)$. Then, we claim that $\nu$ is a 3-to-1 covering map away from the origin. Clearly, since $\nu$ parameterizes $Y$, we see that $\nu$ is a surjective local diffeomorphism onto $\nu(\C^2) = Y$. 

Suppose that $\nu(s,t) = \nu(s',t')$. Then, we must have $s^3 = (s')^3$ and $t^3 = (t')^3$, so that there are cube roots of unity $\eta$ and $\omega$ for which $s = \eta s'$ and $t = \omega t'$. But then,
$$
s^2t = (s')^2(t') = \eta^2 \omega s^2 t,
$$
so either $\eta^2 \omega = 1$, or $st = 0$. Since $\eta$ and $\omega$ are both cube roots of unity, if $\eta^2 \omega =1$, then $\eta = \omega$. Additionally, note that $st = 0$ implies $(s,t) = \0$. It then follows that $\nu$ is 3-to-1 away from the origin. 

Consider then the (real analytic) function 
$$
r(x,y,z,u) = |x|^2 + 3|y|^2 + |z|^2 + 3|u|^2
$$
on $\C^4$; $r$ is proper, transversally intersects $Y$ away from $\0$, and $Y \cap r^{-1}[0,\epsilon)$ gives a fundamental system of neighborhoods of the origin in $Y$. Consequently, $Y \cap r^{-1}(\epsilon)$ gives, up to homotopy, the real link $K_{Y,\0}$. The composition $r(\nu(s,t))$ then gives:
\begin{align*}
r(\nu(s,t)) &= |s^3|^2 + 3|st^2|+|t^3|^2 + 3|s^2t|^2 \\
&= |s|^6 + 3|s|^4|t|^2 +3|s|^2|t|^2+|t|^6 \\
&= \left (|s|^2+|t|^2 \right )^3 = \epsilon,
\end{align*}
provided that $|s|^2 + |t|^2 = \sqrt[3]{\epsilon}$; that is, $\nu$ maps the 3-sphere in $\C^2$ 3-to-1 onto the real link $K_{Y,\0}$. Since the 3-sphere is simply-connected, it is the universal cover of $K_{Y,\0}$. The group of deck transformations given by multiplying $(s,t)$ by a cube root of unity then yields the isomorphism $\pi_1(K_{Y\0}) \cong \Z/3\Z$. Thus, $H_1(K_{Y,\0};\Z) \cong \Z/3\Z$. 

By again applying Poincar\'e duality, we find $H^2(K_{Y,\0};\Z) \cong \Z/3\Z$ as well. By the Universal Coefficient theorem for cohomology, we then have $H_2(K_{Y,\0};\Z) = 0$ so that $H^1(K_{Y,\0};\Z) = 0$ by Poincar\'e duality. Using $\Q$ coefficients, this implies:
$$
H^k(K_{Y,p};\Q) \cong \left \{ \begin{matrix} \Q, && \text{ if $k= 0,3$} \\ 0, && \text{ else } \end{matrix} \right .
$$
for all $p \in Y$, so that $Y$ is a $\Q$-homology manifold.

Equivalently, we find: 
\begin{align*}
H^k(\Ndot_X)_p \cong \left \{ \begin{matrix} \Q, && \text{ if $k=-1$ and $p \in \Sigma f \backslash \{\0\}$} \\ 0, && \text{ if $k \neq -1$, $p \in \Sigma f$} \end{matrix} \right . 
\end{align*}
i.e., $\Ndot_X$ has stalk cohomology concentrated in degree $-1$.

It is not hard to show that the monodromy of the local system $H^{-1}(\Ndot_X)_{|_{\Sigma f \backslash \{\0\}}}$ is trivial; consequently, ${\Ndot_X}_{|_{\Sigma f}}$ is isomorphic to the extension by zero of the constant sheaf on $\Sigma f \backslash \{\0\}$. That is, if $j: \Sigma f \backslash \{\0\} \hookrightarrow \Sigma f$ is the open inclusion, then ${\Ndot_X}_{|_{\Sigma f}} \cong j_! \Q_{\Sigma f \backslash \{\0\}}^\bullet[1]$. In particular, we see that $\Ndot_X$ is not semi-simple as a perverse sheaf on $X$. 

To compare with \remref{rem:semisimple}, this failure to be a semi-simple perverse sheaf can be detected by the presence of $W_0 \Ndot_X \cong \Q_{\{\0\}}^\bullet \neq \0$.

\printbibliography

\end{document}